
\documentclass[letterpaper, 10 pt, conference]{ieeeconf}

\IEEEoverridecommandlockouts               
                                                    
\overrideIEEEmargins                    
\usepackage[pdftex]{graphicx}
\graphicspath{{../pdf/}{../jpeg/}}
\DeclareGraphicsExtensions{.pdf,.jpeg,.png}
\usepackage{amssymb,amsmath,url}
\usepackage{bm} 
\usepackage{multirow}
\usepackage{booktabs}	
\usepackage{epstopdf}
\usepackage{color}
\usepackage{bigfoot}		
\usepackage{algpseudocode}
\usepackage{pifont}
\usepackage{tikz}
\usetikzlibrary{intersections}
\usepackage{color}
\usepackage{cite}
\usepackage{lipsum}
\usepackage{algorithm} 
\usepackage[pdftex]{graphicx}
\usepackage{setspace}
\usepackage{makecell}
\usepackage{comment}
\usepackage{physics}
\usepackage{enumerate}

\definecolor{barblue}{RGB}{153,204,254}
\definecolor{groupblue}{RGB}{51,102,254}
\definecolor{linkred}{RGB}{165,0,33} 
\definecolor{mycolor1}{rgb}{0.00000,0.44700,0.74100}%
\definecolor{mycolor2}{rgb}{0.85000,0.32500,0.09800}%
\definecolor{mycolor3}{rgb}{0.92900,0.69400,0.12500}%

\algrenewcommand\algorithmicrequire{\textbf{Input:}}
\algrenewcommand\algorithmicensure{\textbf{Output:}}
\algrenewcommand\algorithmicforall{\textbf{For}}

\newtheorem{proposition}{Proposition}

 \allowdisplaybreaks

\setcounter{assumption}{0}
\newtheorem{remark}{Remark}

\addtolength{\skip\footins}{1pc plus 5pt}
\interfootnotelinepenalty=10000
\setlength{\textfloatsep}{4pt}
\setlength{\skip\footins}{10pt}

\newcommand{\R}{\mathbb{R}}

\title{\LARGE \bf 
A convex relaxation approach for the optimized pulse pattern problem \\
}
\author{Lukas Wachter, Orcun Karaca, Georgios Darivianakis, and Themistoklis Charalambous
\thanks{L. Wachter and T. Charalambous are with Aalto University, Espoo, Finland. e-mails: {\tt wachter.lukas@t-online.de, themistoklis.charalambous@aalto.fi}. \newline \indent O. Karaca is with the Automatic Control Laboratory, ETH Z\"{u}rich, Switzerland. e-mail: {\tt okaraca@ethz.ch}. \newline \indent G. Darivianakis is with ABB Corporate Research Center, Baden, Switzerland. email:  {\tt georgios.darivianakis@ch.abb.com}.}%
}
\setlength{\skip\footins}{6pt}
\allowdisplaybreaks
\begin{document}
\maketitle
\thispagestyle{empty}
\pagestyle{empty}
\begin{abstract}\noindent Optimized Pulse Patterns (OPPs) are gaining increasing popularity in the power electronics community over the well-studied pulse width modulation due to their inherent ability to provide the switching instances that optimize current harmonic distortions. In particular, the OPP problem minimizes current harmonic distortions under a cardinality constraint on the number of switching instances per fundamental wave period. The OPP problem is, however, non-convex involving both polynomials and trigonometric functions. In the existing literature, the OPP problem is solved using off-the-shelf solvers with local convergence guarantees. To obtain guarantees of global optimality, we employ and extend techniques from polynomial optimization literature and provide a solution with a global convergence guarantee. Specifically, we propose a polynomial approximation to the OPP problem to then utilize well-studied globally convergent convex relaxation hierarchies, namely, semi-definite programming and relative entropy relaxations. The resulting hierarchy is proven to converge to the global optimal solution. Our method exhibits a strong performance for OPP problems up to 50 switching instances per quarter wave. 
\end{abstract} 
\vspace{.2cm}
\begin{keywords}
Optimized pulse patterns, polynomial optimization, power conversion, pulse width modulation 
\end{keywords}

\section{Introduction}
In recent years, combustion engines have increasingly been either augmented or replaced by electric drives. This shift has resulted in a boom in demand for power electronics mainly driven by their higher efficiency, higher reliability and comparatively low prices~\cite{orlowska2016, mecrow2008}. To further improve the efficiency of these devices, one needs to study the efficiency for power conversion. Widely-used power converters are those that switch on and off a constant voltage at certain times to generate an approximated sinusoidal pattern of a desired frequency and magnitude, which inevitably contains harmonics due to the discontinuities at the switching instances. In medium to high voltage applications, these switches are realized with power semiconductors that also have significant switching losses~\cite{rashid2001}. Thus, there is a fundamental trade-off between the switching frequency/losses and the resulting harmonic distortion. 

Optimized Pulse Pattern (OPP) is a pulse width modulation scheme that minimizes the current distortion for a given switching frequency~\cite{boller2011}.
A low switching frequency requirement is crucial in high-power applications where the switching losses in the semiconductor devices is the main bulk of losses in the converter. The OPP switching patterns are computed offline and stored in a look-up-table for a desired fundamental frequency, magnitude, and number of switching instances per wave period~\cite{geyer2012}. This is necessary due to the complexity of solving such optimization problems online. Specifically, both the objective function and the constraints of OPP problems are non-convex. Thus, traditional solvers (for instance, those utilizing interior point methods) require a large amount of initial starting values without providing any certificate on global optimality. 

On the other hand, there are well-known methods in non-convex optimization literature that yield a global optimum if the non-convexity is originating from a polynomial objective function and polynomial constraints~\cite{lasserre2001, parrilo2000, chandrasekaran2016, ahmadi2017,parrilo2004, lasserrebook2015,summers2013approximate,prajna2002introducing}. 
One such method is given by sum of squares (SOS)/moment hierarchy. Developed by the works in~\cite{parrilo2000, lasserre2001}, the method utilizes a hierarchy of increasingly larger semi-definite programs as convex relaxations of the original non-convex problem, producing a convergent series of lower bounds to the optimal value while also proposing solution recovery methods. 
The convergence is typically achieved at a finite level of the hierarchy. While computationally tractable, the SOS approach still requires a considerable amount of resources for large degrees or dimensions. As an alternative, \cite{ahmadi2017} proposes a hierarchy based on linear programming and second-order cone programming to produce lower bounds more efficiently by relaxing the convergence guarantee~\cite{josz2017}. Other advancements towards tractability focus on symmetry reductions and exploiting sparsity~\cite{yang2020, lasserre2015,gatermann2004symmetry,waki2006}. 
Recently,~\cite{chandrasekaran2016} proposed a convex relaxation hierarchy for polynomial optimization problems by utilizing the relative entropy (exponential) cone instead of the positive semidefinite cone, see also the developments in~\cite{murray2018, murray2019,dressler2017positivstellensatz,karaca2017,dressler2018dual,katthan2019unified,murray2020x}. This hierarchy was originally developed for signomial optimization, which is polynomial optimization over the positive orthant. These methods can also be extended for polynomial optimization problems~\cite{murray2019}.  The advantages of this hierarchy compared to the SOS is that it can handle problems with high degree and dimension if the number of unique monomials are limited.
It is even possible to combine these two hierarchies to generate alternative convex relaxations~\cite{karaca2017}.

The contribution of this paper is to employ these convex relaxation methods to solve the OPP problem and to obtain a global solution without relying on local methods. We formulate an approximation to the non-convex, non-polynomial optimization problem by using only polynomials. This reformulation relies on polynomial approximations to the equality constraints involving trigonometric functions. We then show that these approximations form a convergent hierarchy, which is not guaranteed to exhibit monotonic improvements. We illustrate how this method provides us with sufficiently precise results at a certain level of the approximation when the allowed computation time is limited. To the best of our knowledge, this is the first work to utilize convex relaxation hierarchies to obtain a globally optimal solution to the OPP problem. 

The paper is structured as follows. Section~\ref{sec:polyopt} introduces tools from polynomial optimization. Section~\ref{sec:pwm} introduces optimized pulse patterns. In Section~\ref{sec:main}, we present the problem formulation and then formulate a polynomial approximation to the problem. We show that our approximation produces a convergent hierarchy. We also perform a numerical study to evaluate the performance of the resulting hierarchy. Section~\ref{sec:conc} concludes the paper with a discussion on the findings and an outlook for further research.

\textit{Notation:} Bold face denotes a vector, $\bm{x}\in \mathbb{R}^d$ and $x_i\in \mathbb{R}$ denotes the $i$-th element of that vector. A generic multivariate polynomial is denoted as $f(\bm{x}) = \sum_{j=1}^l c_j \bm{x}^{\bm{\beta}^{(j)}}$ with real coefficients $c_j \in \mathbb{R}$ for all $j=1,\dots,l$. $\bm{x}^{\bm{\beta}} = x_1^{\beta_1}\cdots x_d^{\beta_d}$ where $\bm{\beta}$ is the vector of exponents. $\mathrm{deg}(f(\bm{x}) = \mathrm{max}_j \bm{1}^\top \bm{\beta}^{(j)}$ is the total degree of the polynomial. The list of exponent vectors $\{\bm{\beta}^{(j)}\}$ is such that $\bm{\beta}^{(j)}\in \mathbb{Z}^d$ for all $j=1,\dots,l$. The real valued polynomials are denoted as $f(\bm{x}) \in \mathbb{R}[\bm{x}]$ where $\mathbb{R}[\bm{x}]$ is the the polynomial ring in $\bm{x}\in\mathbb{R}^d$ with real coefficients. The ceiling operator is denoted as $\lceil x \rceil$. To denote that $A$ is a positive semidefinite matrix, we write $A\succeq 0$.

\section{Preliminaries on Polynomial Optimization Methods}\label{sec:polyopt}

Polynomial optimization is a class of optimization problems where the objective function and the constraints are of the form of polynomials. Polynomial optimization in its general form is an NP-hard problem~\cite{parrilo2004, lasserrebook2015}. However, solving a non-convex polynomial optimization problem globally is achieved by solving a series of convex relaxations. 

Assume a generic multivariate polynomial $f(\bm{x}) = \sum_{j=1}^l c_j \bm{x}^{\bm{\beta}^{(j)}}$ with the exponent vector $\{\bm{\beta}^{(j)}\}_{j=1}^l \subset \mathbb{Z}_+^d$ with $\bm{\beta}^{(1)} = \bm{0}$ and the coefficients $c_j\in\mathbb{R}$. 
To find its minimum subject to some polynomial inequality constraints, one can utilize the hypograph formulation:
\begin{align}
\begin{split}
f^* = \displaystyle\underset{\gamma}{\max} &\quad \gamma\\
\mathrm{s.t.} & \quad f(\bm{x}) - \gamma \geq 0, \quad \forall \bm{x}\in K ,
\end{split}\label{eq:hypograph}
\end{align}
where $K = \{\bm{x}\in \mathbb{R}^{d}\,|\, g_i(\bm{x}) \geq 0, \ \forall  i\}$ defines a semi-algebraic set.
In other words, the problem boils down to finding the largest $\gamma$ such that the polynomial $f(\bm{x})-\gamma$ is guaranteed to be non-negative on the set $K$. For the sake of simplicity, equality constraints are ignored. Note that, without loss of generality, one can formulate any equality constraint $h(\bm{x}) = 0$ as a pair of two inequality constraints: $h(\bm{x})\geq 0, -h(x)\geq 0$.

The sum of squares (SOS) method is a sufficient certificate that replaces the non-negativity constraint in~\eqref{eq:hypograph} to make the problem computationally tractable. A polynomial written as a sum of squares is, by definition, non-negative over all $\bm{x}\in \mathbb{R}^d$. This certificate can be represented with a positive semi-definite cone. With the vector $z$ being all monomials of degree less than $\lceil n/2 \rceil$, we can write any polynomial of at most degree $n$ as
\begin{align*}
    f(\bm{x}) = z^\top Q z\ .
\end{align*}
Then, the polynomial $f(\bm{x})$ is sum of squares if and only if $Q\succeq 0$. This non-negativity certificate is used to construct a convergent hierarchy of lower bounds~\cite{lasserre2001, parrilo2003}. Assume $n$ denotes the largest degree among all polynomials involved in \eqref{eq:hypograph}. By replacing the non-negativity by a sufficient and tractable SOS certificate utilizing weak duality, we approximate~\eqref{eq:hypograph} as:
\begin{align}
\begin{split}
f^*\geq f_0^* = \max_{\gamma, \bm{\lambda}\geq 0} \ & \ \gamma,\\
\mathrm{s.t.} \ & \ f(\bm{x}) -\gamma - {\sum_i} \lambda_i g_i(\bm{x}) \in \mathrm{SOS}(n),
\end{split} \label{eq:dual}
\end{align}
where $\mathrm{SOS}(n)$ is the set of sum of squares polynomials up to degree $n$, defined as
\begin{align*}
    \mathrm{SOS}(n) = \{f(\bm{x}) \, &| \, f(\bm{x}) = {\textstyle\sum_i} f_i^2(\bm{x}) : f_i(\bm{x}) \in \mathbb{R}[\bm{x}] \\
    & \quad\ \text{and \  deg}(f_i(\bm{x}))\leq \lceil n/2 \rceil, \ \forall i \}.
\end{align*}
The subscript of $f_0^*$ in \eqref{eq:dual} denotes the level of the hierarchy. 

It is shown in~\cite{lasserre2001} that the optimality gap is reduced by introducing Lagrange multipliers which are SOS polynomials $\lambda(\bm{x})$ of a fixed degree:
\begin{align*}
\begin{split}
f_{p-1}^*\leq f_p^*& = \max_{\gamma} \  \ \gamma,\\
\mathrm{s.t.} \  \ f(&\bm{x}) -\gamma - \sum_i \lambda_i(\bm{x}) g_i(\bm{x})  \in \mathrm{SOS}(n+p), \\
 \ \lambda_i(&\bm{x}) \in \mathrm{SOS}(n+p-n^{(i)}),\quad \forall  i,
\end{split} 
\end{align*}
where $n^{(i)}$ denotes the degree of the polynomial constraint $g_i(\bm{x})$. Under some easily attained conditions, convergence of the hierarchy is guaranteed and whenever the duality gap is zero the solution can be recovered. The dual perspective of this hierarchy is referred to as the moment relaxation hierarchy~\cite{lasserre2001,schmuedgen1991}.

Notice that the problem size grows exponentially with both the number of variables $d$ and the degree $n$. Due to this phenomenon, the SOS method struggles when solving problems with a large number of variables and/or problems involving high degree polynomials. An alternative way of certifying non-negativity of polynomials is to instead utilize the relative entropy cone~\cite{chandrasekaran2016} (and the so called AM/GM polynomials). This hierarchy relies on the arithmetic mean geometric mean inequality and it will be referred to as the SAG hierarchy. As is the case for SOS, this approach also produces a convergent hierarchy of lower bounds. However, the certificate scales with the square of the number of unique monomials and can thus potentially produce better results than the SOS method for problems which are high degree and dimension but sparse. As a remark, both certificates/hierarchies can also be combined.

\section{Pulse Width Modulation Framework}\label{sec:pwm}

In Pulse Width Modulation (PWM) a reference voltage $v: [0,2\pi]\rightarrow \mathbb{R}$ is approximated as a switching signal $u: [0,2\pi]\rightarrow \{-1,0,1\}$. The number of possible switch positions is defined by the converter structure, e.g., in this case $u$ is defined for a three-level converter with three switching positions. The instances where we switch from one level to another are such that the output voltage follows some given reference $v$ as accurately as possible. The magnitude of the modulating signal is commonly referred to as the modulation index $m\in [0,\frac{4}{\pi}]$.

\subsection{Optimized Pulse Patterns}
Optimized Pulse Patters (OPPs) present an optimal approach to minimizing current harmonics. In this case a cost function is formulated, typically related to the current distortions, and then minimized subject to constraints~\cite{boller2011}.

The objective is usually chosen to be related to the current total demand distortion (TDD) in the stator, which for a three-phase, three-level converter connected to an induction motor is given by
\begin{align}
\begin{split}\label{eq:cost_complicated}
I_{\text{TDD}} =&\frac{\sqrt{2}V_{\text{DC}}}{\pi I_{\text{s,nom}}\omega_1 X_\sigma}  \\ &\times\sqrt{\sum_{n=5,7,11,\dots}\left(\frac{1}{n^2}\sum_{i=1}^{d}\Delta u_i \cos(n\alpha_i)\right)^2}\ ,
\end{split}
\end{align}
where $I_{\text{s,nom}}$ is the nominal root mean square stator current. $V_{\text{DC}}$ is the magnitude of the DC voltage, $\omega_1$ is the fundamental frequency and $X_\sigma$ is the leakage reactance of the motor driven by the converter. The magnitude of each harmonic is a function of the $d$ switching instances $\{\alpha_i\}_{i=1}^d$ in one quarter period. The fundamental component for $n=1$ is excluded. At the instance $\alpha_i$, the voltage is switched from $u_{i-1}$ to $u_i$, where $u_i\in\{-1,0,1\}$. Usually quarter- and half-wave symmetries are imposed and $u_0 = 0$ is assumed. Let us define $\Delta u_i = u_i - u_{i-1}$ for $i=1,2,\dots,d$ to be the switching transition. For simplicity, in this work, we predetermine $\Delta u_i = (-1)^{i+1}$ for $i = 1,2,\dots,d$, since we enforce that the maximum difference between subsequent transitions is always $1$.  

All the terms in~\eqref{eq:cost_complicated} except those inside the square root are constants and dependent on the parameters of the induction motor connected to the inverter; thus, the objective function to be minimized is
\begin{align*}
J(\bm{\alpha}) = \sum_{n = 5,7,11,\dots}\left(\frac{1}{n^2}\sum_{i=1}^{d}\Delta u_i \cos(n\alpha_i)\right)^2, 
\end{align*}
where $\bm{\alpha} = [\alpha_1,\alpha_2,\dots,\alpha_d]^\top$ is the vector of switching angles. 

The constraints of this problem are two-fold. First, the amplitude of the fundamental component ($n=1$) needs to match the  modulation index $m$, that is,
\begin{align*}
\hat{u}_1 = \frac{4}{\pi}\sum_{i=1}^{d}\Delta u_i \cos(\alpha_i) = m .
\end{align*}
Second, the order of the switching transitions must adhere to:
\begin{align*}
0\leq \alpha_1 \leq \alpha_2 \leq \dots \leq \alpha_d \leq \frac{\pi}{2} .
\end{align*}
Thus, the final optimization problem for a three-phase converter is given by: 
\begin{align}
\begin{split}
\min_{\bm{\alpha}} \ & \ \sum_{n=5,7,11,\dots}\left(\frac{1}{n^2}\sum_{i=1}^{d} \Delta u_i \cos(n\alpha_i) \right)^2 \\
\mathrm{s.t.} \ & \ \ \frac{4}{\pi}\sum_{i=1}^{d}\Delta u_i \cos(\alpha_i) = m, \\
& \ \ 0\leq\alpha_1\leq \alpha_2 \leq \dots\leq\alpha_d \leq \frac{\pi}{2}.
\end{split} \label{eq:opt_prob_orig}\
\end{align}
However, this yields a non-convex optimization problem. For tractability issues, the sum in the objective function is typically truncated to consider current harmonics up to a specified order, for instance, 300. Then, polynomial approximation of the truncated sum is performed, as shown in in~\cite{thesis_lukas}. Henceforth, we refer as objective function $J(\bm{\alpha})$ the truncated and approximated polynomial objective function. An example of a computed OPP for pulse number $d=3$ and modulation index $m=0.9$ is shown in Figure~\ref{fig:oppex}.
\begin{figure}
	\centering		
    \includegraphics[width=0.48\textwidth]{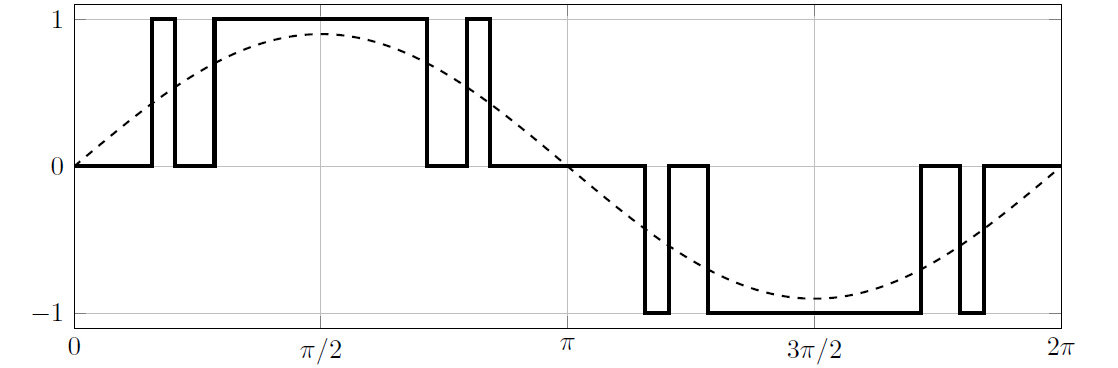}
	\caption[OPP example for $d=3$ for $m=0.9$]{OPP for $d = 3$ switching angles per quarter period and a modulation index of $m=0.9$ (solid line) and the modulating signal (dashed line).}
	\label{fig:oppex}
\end{figure}

\section{A Convex Relaxation Hierarchy for the OPP Problem}\label{sec:main}

Our goal is to solve \looseness=-1 
\begin{align}
\begin{split}
\bm{\alpha}^* = \arg \min_{\bm{\alpha}} \ & \ J(\bm{\alpha})\\
\mathrm{s.t.} \ & \ \frac{4}{\pi}\sum_{i=1}^d \Delta u_i \cos(2\pi \alpha_i) = m,\\
& \ 0\leq \alpha_1 \leq \alpha_2 \leq \dots \leq \alpha_d \leq \frac{1}{4}.
\end{split} \label{eq:opt_prob_poly}
\end{align}
Let the optimal cost be denoted as $J^* = J(\bm{\alpha}^*)$. Although the objective function is a polynomial, the equality constraint involves a trigonometric function. In the following, we use a Taylor-series approximation and derive its estimation error.

\subsection{Equality constraint approximation and its error}

To enable the use of polynomial methods for solving the OPP problem, the equality constraint is approximated using its Taylor-series expansion truncated after the $k$-th order, which we will denote as $\Gamma_{k}(\alpha)$, with an expansion point~$\alpha_0$:
\begin{align*}
\cos(2\pi \alpha) \approx \Gamma_k(\alpha) = \sum_{n=0}^k \frac{\dv[n]{\alpha}\cos(2\pi \alpha)|_{\alpha=\alpha_0}}{n!}(\alpha-\alpha_0)^n .
\end{align*} 

Let $R_k(\alpha)$ be the absolute error of a Taylor-series $\Gamma_k(\cdot)$ evaluated at $\alpha$. For any interval $(\alpha_0 -r, \alpha_0+r)$ with $r>0$, the inequality 
\begin{align*}
\begin{split}
R_k(\alpha) \leq (2\pi)^{k+1}\frac{|\alpha-\alpha_0|^{k+1}}{(k+1)!} \leq (2\pi)^{k+1}\frac{r^{k+1}}{(k+1)!}, \\ \quad \forall \alpha \in [\alpha_0-r,\alpha_0+r] ,
\end{split}
\end{align*}
holds, since 
\begin{align*}
    \dv[k+1]{\alpha}\cos(2\pi \alpha) \leq (2\pi)^{k+1}.
\end{align*} 
Let $\Delta m_k$ denote the error in the modulation index equality constraint in problem~\eqref{eq:opt_prob_poly}. It can be defined as
\begin{align}
\begin{split}
\Delta m_k = \frac{4d}{\pi}(2\pi)^{k+1}\frac{r^{k+1}}{(k+1)!} \geq\frac{4d}{\pi}R_k(\alpha)\label{eq:remainder},\\ \, \forall \alpha \in [0,\dfrac{1}{4}],
\end{split}
\end{align}
where $r=1/8$ and $\alpha_0=1/8$ such that $[\alpha_0-r,\alpha_0+r] = [0,1/4]$. In this context, the relaxation of the OPP problem~\eqref{eq:opt_prob_poly} is given by
\begin{align}
\begin{split}
J_k^* = \min_{\bm{\alpha}} \ & \ J(\bm{\alpha})\\
\mathrm{s.t.} \ & \ \frac{4}{\pi}\sum_{i=1}^d \Delta u_i \Gamma_k(\alpha_i)  \leq m + \Delta m_k,\\
& \ \frac{4}{\pi}\sum_{i=1}^d \Delta u_i \Gamma_k(\alpha_i)  \geq m - \Delta m_k,\\
& \ 0\leq \alpha_1 \leq \alpha_2 \leq \dots \leq \alpha_d \leq \frac{1}{4}.
\end{split} \label{eq:opt_prob_poly_approx}
\end{align}
Clearly, any feasible solution to \eqref{eq:opt_prob_poly} is also feasible to~\eqref{eq:opt_prob_poly_approx}, so we have $J_k^*\leq J^*$. We call the polynomial optimization \eqref{eq:opt_prob_poly_approx} the $k$-th level of the Taylor-series approximation hierarchy.
\subsection{Convergence of the hierarchy}
We now show that the relaxation of the original problem (combined with convex relaxation hierarchies for polynomial optimization problems) converges to the optimal solution of the original problem as the level of the hierarchy increases. We first prove two propositions that will facilitate our discussions.

\begin{proposition}
$\Delta m_k$ is strictly decreasing in $k$ when $r=1/8$.
\label{lem:mon}
\end{proposition}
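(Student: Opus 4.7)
The plan is to establish strict monotonicity by examining the ratio $\Delta m_{k+1}/\Delta m_k$ directly from the closed-form expression in \eqref{eq:remainder}. Since all factors appearing in $\Delta m_k$ are positive, showing the ratio is strictly less than $1$ for every $k \in \mathbb{N}$ is equivalent to the claim.

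Substituting the definition, most factors cancel and one obtains
\begin{align*}
\frac{\Delta m_{k+1}}{\Delta m_k} = \frac{(2\pi)^{k+2} r^{k+2}/(k+2)!}{(2\pi)^{k+1} r^{k+1}/(k+1)!} = \frac{2\pi r}{k+2}.
\end{align*}
Plugging in $r = 1/8$ gives $\Delta m_{k+1}/\Delta m_k = \pi/(4(k+2))$. The remaining step is to observe that for every $k \geq 0$ we have $k+2 \geq 2$, hence
\begin{align*}
\frac{\Delta m_{k+1}}{\Delta m_k} \leq \frac{\pi}{8} < 1,
\end{align*}
so $\Delta m_{k+1} < \Delta m_k$.

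There is no real obstacle in this argument: the entire proof reduces to a one-line factorial cancellation plus the numerical check that $\pi/8 < 1$. The only point worth being careful about is making sure the bound applies from the base case $k=0$ onward (where the ratio is largest) so that strict monotonicity holds for all levels of the hierarchy, and that $r = 1/8$ is the specific value used, since for larger $r$ the inequality could fail for small $k$.
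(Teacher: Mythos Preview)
Your proof is correct and takes essentially the same approach as the paper: the paper computes the \emph{difference} $\Delta m_k - \Delta m_{k-1}$, factors out the common terms, and reduces to the inequality $2\pi r - k - 1 < 0$, while you compute the \emph{ratio} $\Delta m_{k+1}/\Delta m_k = 2\pi r/(k+2)$ and check it is below $1$. Both arguments rest on the same factorial cancellation and the same numerical observation that $2\pi r = \pi/4 < 1$ when $r = 1/8$.
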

\begin{proof} 
Monotonicity is established when considering two subsequent error terms in \eqref{eq:remainder}:
\begin{align*}
\Delta m_k  - \Delta m_{k-1} &=\frac{4d}{\pi}\left((2\pi)^{k+1} \frac{r^{k+1}}{(k+1)!} - (2\pi)^k \frac{r^k}{k!}\right)\\
&=\frac{4d}{\pi}(2\pi)^k\left(\frac{r^k (2\pi r - k - 1)}{(k+1)!} \right).
\end{align*}
Due to $d>0$, $r\geq 0$, and $k\geq 0$ for strict monotonicity, it must hold that
\begin{align*}
    2\pi r - k - 1 < 0,
\end{align*}
which is true for $r=1/8$. 
\end{proof}
\begin{proposition}
As $k\to \infty$, the problem~\eqref{eq:opt_prob_poly_approx} converges to the non-approximated problem~\eqref{eq:opt_prob_poly}.
\label{lem:conv}
\end{proposition}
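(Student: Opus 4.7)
The plan is to establish convergence at the level of the feasible set and then transfer it to the optimal value through a standard compactness argument. First, I would show that the slack $\Delta m_k$ vanishes as $k\to\infty$. Using the closed form from \eqref{eq:remainder} with $r=1/8$, we have $\Delta m_k=\tfrac{4d}{\pi}(2\pi r)^{k+1}/(k+1)!=\tfrac{4d}{\pi}(\pi/4)^{k+1}/(k+1)!$, and since the factorial dominates the geometric factor this tends to zero. (Proposition~\ref{lem:mon} already gives strict monotonic decrease, so together this yields $\Delta m_k\downarrow 0$.) Simultaneously, the standard Taylor remainder bound shows $|\Gamma_k(\alpha)-\cos(2\pi\alpha)|\le (2\pi)^{k+1}r^{k+1}/(k+1)!\to 0$ uniformly on $[0,1/4]$.

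Next I would compare optimal values. One direction is immediate: any $\bm{\alpha}$ feasible for \eqref{eq:opt_prob_poly} is feasible for \eqref{eq:opt_prob_poly_approx} because $|\tfrac{4}{\pi}\sum_i\Delta u_i(\Gamma_k(\alpha_i)-\cos(2\pi\alpha_i))|\le \Delta m_k$ by construction; hence $J_k^*\le J^*$ for every $k$, as already noted in the excerpt. For the reverse inequality I would argue via compactness. Let $\bm{\alpha}_k^*$ be an optimizer of \eqref{eq:opt_prob_poly_approx}; it lies in the compact box $[0,1/4]^d$, so along a subsequence $\bm{\alpha}_{k_j}^*\to\bm{\alpha}^\infty\in[0,1/4]^d$. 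The ordering constraints $0\le\alpha_1\le\cdots\le\alpha_d\le 1/4$ are closed and pass to the limit.

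It remains to verify the modulation-index equality at $\bm{\alpha}^\infty$. Writing
\begin{align*}
\Bigl|\tfrac{4}{\pi}\sum_{i=1}^d\Delta u_i\cos(2\pi\alpha_{k_j,i}^*)-m\Bigr|
&\le \Bigl|\tfrac{4}{\pi}\sum_{i=1}^d\Delta u_i\Gamma_{k_j}(\alpha_{k_j,i}^*)-m\Bigr|\\
&\quad+\tfrac{4}{\pi}\sum_{i=1}^d|\cos(2\pi\alpha_{k_j,i}^*)-\Gamma_{k_j}(\alpha_{k_j,i}^*)|,
\end{align*}
the first term is bounded by $\Delta m_{k_j}\to 0$ by feasibility and the second by $\tfrac{4d}{\pi}(2\pi)^{k_j+1}r^{k_j+1}/(k_j+1)!\to 0$; passing to the limit using continuity of cosine gives $\tfrac{4}{\pi}\sum_i\Delta u_i\cos(2\pi\alpha_i^\infty)=m$. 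Hence $\bm{\alpha}^\infty$ is feasible for \eqref{eq:opt_prob_poly}, so by continuity of $J$ we obtain $J^*\le J(\bm{\alpha}^\infty)=\lim_j J(\bm{\alpha}_{k_j}^*)=\lim_j J_{k_j}^*$. Combined with $J_k^*\le J^*$, every convergent subsequence has the same limit $J^*$, so $J_k^*\to J^*$ and any cluster point of $\{\bm{\alpha}_k^*\}$ is an optimizer of \eqref{eq:opt_prob_poly}.

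The main obstacle is conceptual rather than computational: care is needed because the approximation relaxes a non-convex \emph{equality} into a pair of inequalities with shrinking slack, so one must rule out the possibility that the relaxed optimizers drift toward a point that is strictly infeasible for the exact equality. The uniform Taylor bound together with the explicit decay of $\Delta m_k$ is precisely what forecloses this drift, which is why both pieces—the vanishing of $\Delta m_k$ and the uniform convergence $\Gamma_k\to\cos(2\pi\,\cdot\,)$—enter simultaneously in the limit argument.
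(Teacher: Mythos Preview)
Your argument is correct and in fact considerably more complete than the paper's. The paper's proof consists of two observations: (i) $\lim_{k\to\infty}\Delta m_k=\lim_{k\to\infty}\tfrac{4d}{\pi}(2\pi)^{k+1}\tfrac{r^{k+1}}{(k+1)!}=0$, and (ii) any feasible point of \eqref{eq:opt_prob_poly} is feasible for \eqref{eq:opt_prob_poly_approx}; from these it simply asserts convergence. Strictly speaking, (ii) yields only the bound $J_k^*\le J^*$ that you also record, and (i) alone does not immediately force $\liminf_k J_k^*\ge J^*$. Your compactness/subsequence argument---extracting a cluster point $\bm{\alpha}^\infty$ of relaxed optimizers, combining the shrinking slack $\Delta m_k$ with the uniform Taylor remainder to show $\bm{\alpha}^\infty$ satisfies the exact modulation-index equality, and then invoking continuity of $J$---is exactly the missing step that makes the convergence $J_k^*\to J^*$ rigorous. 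What you gain is an honest proof of the reverse inequality and, as a byproduct, convergence of optimizers along subsequences; what the paper gains is brevity, at the cost of leaving the reader to supply precisely the limit argument you wrote out.
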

\begin{proof}
One can immediately see that for an increased Taylor-series level the remainder term and thus also the error in the modulation index $\Delta m$ goes to zero:
\begin{align*}
\lim_{k\to \infty}\Delta m_k = \lim_{k\to \infty}  \frac{4d}{\pi}(2\pi)^{k+1}\frac{r^{k+1}}{(k+1)!}= 0\ .
\end{align*}
Thus, the problem does converge to its optimum, since any feasible solution to \eqref{eq:opt_prob_poly} is also feasible to~\eqref{eq:opt_prob_poly_approx}. 
\end{proof}

In general, a hierarchy is said to be complete, if two conditions are satisfied: \textit{(i)} the lower bounds the hierarchy produces need to be non-decreasing, and \textit{(ii)} it converges to the global optimum $J^*$. Let the lower bound obtained for the $p$-th level of the hierarchy be denoted by $J_p$, then we can write these two requirements as
\begin{enumerate}
    \item[\textit{(i)}] $\lim_{p\to \infty} J_p = J^*$,
    \item[\textit{(ii)}] $J_{\tilde{p}} \leq J_p \leq J^*, \quad \forall p \leq \tilde{p}$.
\end{enumerate}

Both of these requirements have been shown to hold for either the SAG and the SOS hierarchy under some regularity\footnote{For instance, in case of SOS, these so-called Archimedeanity conditions hold if we have some ball constraints over the feasible region~\cite{putinar1993}. These can easily be included in our problem since we are already working with box constraints.} conditions~\cite{chandrasekaran2016,lasserre2001, laurent2009}. The main interest now lies in whether the same holds for the level of the Taylor-series approximation. As a remark, in the SAG hierarchy, there are two parameters to determine the level of the hierarchy, that is, $p\in\R^2$~\cite{chandrasekaran2016, murray2018}.

Next, we study how the lower bound changes with level $k$. In Proposition~\ref{lem:conv}, we showed that the lower bound will (eventually) converge to the optimal one, achieving the property \textit{(i)} above. 

For monotonicity of the hierarchy,  we need to show that $J_k^*$ is nondecreasing in $k$. Even though $\Delta m_k$ is strictly decreasing in $k$ (as we showed in Proposition~\ref{lem:mon}), this result alone is not enough to conclude this property. We need that any solution for an approximation of level $k+1$ has to be a feasible solution to the level $k$. This monotonicity property holds for both the SOS and the SAG hierarchy in the level of the hierarchy $p$. In case of the Taylor-series-based approximation, we let $F_k(\bm \alpha) = \frac{4}{\pi}\sum_{i=1}^d \Delta u_i \Gamma_k(\alpha_i)-m$. Then, for any $\bm \alpha$ it should hold
\begin{align*}
    \forall \bm \alpha :  -\Delta m_{k+1}\le F_{k+1}(\bm \alpha)&\leq \Delta m_{k+1} \\
    \Rightarrow \quad -\Delta m_{k}\le F_k(\bm \alpha)& \leq \Delta m_k.
\end{align*}
For instance, for the right hand side, this is implied by
\begin{align*}
    \max_{\bm \alpha}[F_k(\bm \alpha)-F_{k+1}(\bm \alpha)]\leq \Delta m_k - \Delta m_{k+1}.
\end{align*}
To the best of our knowledge proving any of these two statements is not possible for the Taylor-series-based approximation. 

However, if the Taylor-series level $k$ is kept constant and the hierarchy level $p$ increases, the lower bound will always improve monotonically as compared to the previous level as
\begin{align*}
J_{p,k}\leq J_{\tilde{p},k} \leq J^*, \quad \forall p\leq \tilde{p},\ \forall k . \vspace{.1cm}
\end{align*}
This is an inherent property of the SOS/SAG hierarchy and is shown in~\cite{lasserre2001} and \cite{chandrasekaran2016}.
Considering the case where the approximation level $k$ goes to infinity and $p$ is large enough such that the SOS/SAG hierarchy converges, we can conjecture that
\begin{align*}
\exists p: J_{p,k{\rightarrow\infty}} =J^* . 
\end{align*}
Hence, the proposed hierarchy is convergent and eventually (as $k\to \infty$) attains the globally optimal solution.

\vspace{.1cm}

\section{Numerical Results}
In this section, we show that the previously discussed approximations can be used to solve the OPP problem to global optimality in an efficient manner. To this end, the OPP problems are also solved using a local optimization solver without any polynomial approximation to the equality constraint. The SOS/SAG hierarchy level $p$ and the Taylor-series approximation level $k$ will be subject to changes, so is the number of optimization variables~$d$. 

\vspace{.1cm}

\subsection{Solution of the OPP problem using polynomial optimization methods}
As a local optimization solver, \texttt{fmincon()} is used in MATLAB to solve the non-approximated problem~\eqref{eq:opt_prob_poly}. This problem is solved with 100 random initial conditions using gradient descent methods and the minimum was designated to be the global minimum. We first solve it for modulation index of $m=0.55$. The objective of problem~\eqref{eq:opt_prob_poly_approx} is a degree-three polynomial, see~\cite{thesis_lukas} for details. 
The SOS hierarchy is implemented via the YALMIP toolbox for MATLAB~\cite{lofberg2004, lofberg2009} and using the MOSEK solver~\cite{mosek}. The SAG hierarchy is implemented via the SAGEopt package for Python~\cite{murray2018, murray2019}, also using the MOSEK solver.\footnote{Note that in the SAG hierarchy and thus in the SAGEopt package two parameters determine the level of the hierarchy. For the sake of simplicity, the lowest level of the hierarchy is denoted as $p=1$, representing a set of input parameters of \texttt{p=0}, \texttt{q=1} (and \texttt{ell=0}) in the toolbox~\cite{murray2018,murray2019}.} Table~\ref{tab:num_exp_bern_error_bounds} illustrates the obtained results for a fixed truncation level of the Taylor-series approximation: $k=5$. The value $J(\bm{\alpha}^*)$ is the lower bound obtained for the chosen Taylor-series approximation level ($k=5$). Equivalently, this is the objective function $J$ evaluated at the optimizer $\bm{\alpha^*}$ returned after the convergence of the SOS/SAG hierarchies. The deviation of the resulting modulation index from the one imposed by the equality constraint is marked with $m_e$. It becomes apparent that the lower number of switching instances $d$ in the first quarter is, the lower the required Taylor-series approximation level is. The hierarchy converges in all cases for the reported hierarchy level. 
\begin{table*}\small
	\centering	
	\caption{Solver comparison for $k=5$ and $m=0.55$}
	\label{tab:num_exp_bern_error_bounds}
	\begin{tabular}{r|r|r|r|r|r|r}
		$d$ & Method & $J(\bm{\alpha}^*)$& $\bm{\alpha}^*$ & convergence level $p$& $m_\text{e}$ &time (s)  \\\hline
		1 & Local optimization solver & 0.1617 & [0.1789] & - &0.5500 &0.87 \\
		& YALMIP SOS toolbox & 0.1615&[0.1790]&4 & 0.5496 & 0.55 \\
		& SAGEopt toolbox& 0.1615&[0.1790]&1 & 0.5496 & 0.09\\\hline
		2 & Local optimization solver & 0.1554 & [0.1380; 0.2155] & - & 0.5500 &1.69 \\
		&  YALMIP SOS toolbox & 0.1549&[0.1380; 0.2154] & 4  &0.5492&0.49\\
		&SAGEopt toolbox & 0.1549 &[0.1379; 0.2154]&1 &0.5492&0.14  \\\hline
		3 & Local optimization solver & 0.1534 & [0.1151; 0.1639; 0.2184] & - & 0.5500 & 1.84 \\
		&  YALMIP SOS toolbox& 0.1527&[0.1155; 0.1645; 0.2188]&4 & 0.5487 &0.44\\
		& SAGEopt toolbox & 0.1527&[0.1154; 0.1644; 0.2188]& 1 & 0.5487 &0.16\\\hline		
		6 & Local optimization solver & 0.1519 & \makecell{[0.0817; 0.1028; 0.1425; \\0.1713; 0.2020; 0.2359]} & - & 0.5500&3.17 \\
		&  YALMIP SOS toolbox & 0.1506&\makecell{[0.0815; 0.1024; 0.1420;\\ 0.1704; 0.2009; 0.2351]}&4 & 0.5476& 19.5\\
		& SAGEopt toolbox &0.1506 &\makecell{[0.0814; 0.1023; 0.1420;\\ 0.1704; 0.2009; 0.2351]}&1 & 0.5476& 1.1\\\hline
		
	\end{tabular}
\end{table*}
\subsection{Effect of approximation level on the optimal solution}
A similar convergent behaviour is also apparent in Figure~\ref{fig:comp_TO_pulse_no}, where we report the deviation of the obtained lower bound compared to the actual minimum obtained using \texttt{fmincon()} when solving the Taylor-series approximation for different pulse numbers. Again a modulation index of $m=0.55$ is used. In our experiments, up to $d=20$, an approximation level of $k = 6$ was sufficient for reasonably close results to the actual optimum. 
\begin{remark}
Note that the convergence of the hierarchy was shown for $k\rightarrow\infty$. Our numerical experiments have also illustrated that monotonicity generally holds for the level of the Taylor-series approximation. Hence, increasing the approximation level is expected to yield an improved lower bound. 
\end{remark}
\vspace{.1cm}

For $d=50$ we need to go up to a Taylor-series approximation level of $k=8$ to be reasonably close, which can be observed in Table~\ref{tab:num_exp_high_pulse}. 
\begin{table}\small
	\centering
	\caption{$d=50$, $m=0.55$, effect of approximation level $k$ on optimum}
    \label{tab:num_exp_high_pulse}
	\begin{tabular}{r|r|r|r|r|r}
		d & Method & $k$ &$J(\bm{\alpha}^*)$& $m_\text{e}$ &time (s)  \\\hline
		50 & Local solver& - & 0.1513 & 0.5500&258 \\
		& SAGEopt toolbox& 5 &0.1405  &  0.5292 & 388\\
		& SAGEopt toolbox & 8 & 0.1513&0.5500 & 508\\\hline
	\end{tabular}
\end{table}
\begin{figure}
	\centering	
    \includegraphics[width=0.44\textwidth]{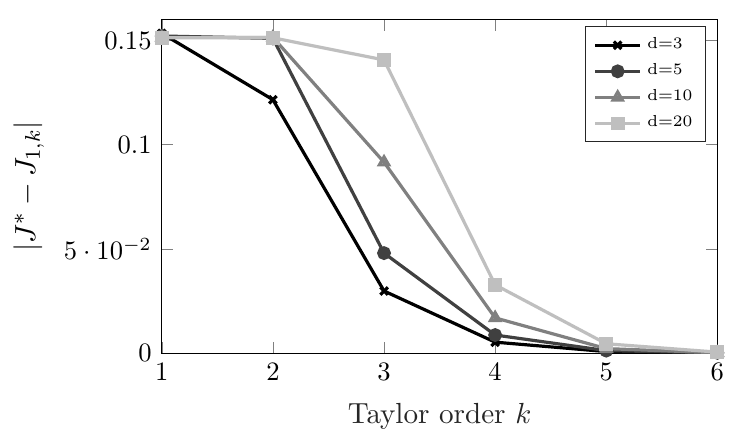}
    \vspace{.1cm}
	\caption{Effect of the approximation level $k$ on the optimal cost. $|J^*-J_{1,k}|$ is the absolute deviation of the optimal cost of the problem with approximation level $k$ from the actual optimal cost $J^*$, obtained with \texttt{fmincon()}. The approximated problem did converge for a hierarchy level of $p=1$.}
	\label{fig:comp_TO_pulse_no}
	\vspace{.1cm}
\end{figure}

Observe that Taylor-series levels are quite low, but a global optimum can be found within reasonable time. Going to larger pulse numbers or SOS/SAG levels proves to be computationally demanding due to the scalability of these polynomial optimization methods in both the degree of the polynomial and the number of optimization variables. 

While there are many efforts in the literature to improve the performance and scalability of the methods, most of these approaches try to exploit sparsity or symmetry in the objective, which are not applicable to the problem at hand. The objective function of problem~\eqref{eq:opt_prob_poly} contains all the unique monomials, which means that the size of the problem for higher levels of the hierarchy is still too large to be feasible for currently available SDP or relative entropy (exponential) cone solvers.

\vspace{.1cm}

\section{Conclusion}\label{sec:conc}

We showed that the OPP problem can be solved efficiently via methods for polynomial optimization using a Taylor-series approximation to the equality constraint. While the solution using local methods is already well-established, the use of polynomial methods comes with the advantage of producing a global solution. We formed a convex relaxation hierarchy based on a Taylor-series approximation and well-establish SOS/SAG hierarchies of polynomial optimization problems. We showed that this hierarchy is convergent, and the approximation error of the equality constraint is monotonically decreasing. We  showed  how these methods  provide  us  with  sufficiently  precise  results for  a  certain  level  of  the Taylor-series approximation  in  a  reasonable computation time.

Our future work will focus on larger pulse numbers and polynomial degrees which still prove to be computationally demanding. 

\vspace{.1cm}

\section*{Acknowledgements}
The authors would like to thank Tobias Geyer at ABB Corporate Research Center in Baden, Switzerland for sharing his expertise on OPPs and giving his thoughts on the progress of the research. The authors would also like to thank Gernot Riedel at ABB Corporate Research Center in Baden, Switzerland for making the completion of this work possible in his research group.

\vspace{.1cm}

\bibliographystyle{IEEEtran}
\bibliography{IEEEabrv,library}

\end{document}